\newtheorem{obs}{Observation}[section]
\newtheorem{thm}[obs]{Theorem}
\newtheorem{cor}[obs]{Corollary}
\newtheorem{lemma}[obs]{Lemma}
\newtheorem{pro}[obs]{Proposition}
\newtheorem{preproof}{{\bf Proof.}}
\newenvironment{proof}[1]{\begin{preproof}{\rm
               #1}\hfill{$\rule{2mm}{2mm}$}}{\end{preproof}}
\def\newpic#1{}
\date{}
\begin{document}
\title{
{\Large{\bf On doubly resolving sets  in graphs }}}
%
{\small
\author{Mohsen Jannesari
\\
[1mm]
{\small \it  Department of  Science}\\
{\small \it  Shahreza Campus, University of Isfahan, Iran} \\
{\small \it Email: m.jannesari@shr.ui.ac.ir}
}}
\maketitle \baselineskip15truept
\begin{abstract}
Two vertices $u,v$ in a connected graph $G$ are doubly resolved by vertices $x,y$ of $G$ if
$$d(v,x)-d(u,x)\neq d(v,y)-d(u,y).$$
 A set $W$ of vertices of the graph $G$ is
a doubly resolving set for $G$ if every two distinct vertices of $G$ are doubly resolved by some two vertices of $W$.
Doubly resolving number of a graph $G$, denoted by
$\psi(G)$, is the minimum cardinality of a doubly resolving set for the graph $G$. The aim of this paper is to investigate
doubly resolving sets in graphs. An upper bound for $\psi(G)$ is obtained in terms of order and diameter of $G$.
$\psi(G)$ is computed for some  graphs and all graphs $G$ of order $n$ with the property $\psi(G)=n-1$ are determined.
Also, doubly resolving sets for unicyclic graphs are studied and it is proved that the difference between
the number of leaves and  doubly resolving number of a unicyclic graph is at most $2$.
\end{abstract}
{\bf Keywords:} doubly Resolving set;   doubly resolving number; resolving set; unicyclic graph.
\\
\par {\bf AMS Mathematical Subject Classification [2020]:} 05C12
\section{Introduction}
In this section, we present some definitions and known results
which are necessary  to prove our main results. Throughout this
paper, $G$ is a  simple connected graph
with vertex set $V(G)$, edge set $E(G)$ and order $n(G)$.
The distance between two
vertices $u$ and $v$, denoted by $d_G(u,v)$, is the length of a
shortest path between $u$ and $v$ in $G$. Also, $N_G(v)$ is the
set of all neighbors of vertex $v$ in $G$. We write these simply
$d(u,v)$ and $N(v)$,  when no confusion can arise.  The diameter of a graph $G$ is
${\rm diam}(G)=\max_{_{u,v\in V(G)}} d(u,v)$. A unicyclic graph is a graph with exactly one cycle.
The
symbols $(v_1,v_2,\ldots, v_n)$ and $(v_1,v_2,\ldots,v_n,v_1)$
represent a path of order $n$, $P_n$, and a cycle of order $n$,
$C_n$, respectively.
\par
 For an ordered subset $W=\{w_1,\ldots,w_k\}$ of $V(G)$ and a
vertex $v$ of $G$, the {\it metric representation}  of $v$ with
respect to $W$ is
$$r(v|W)=(d(v,w_1),\ldots,d(v,w_k)).$$
The set $W$ is  a {\it resolving set} for
$G$ if the distinct vertices of $G$ have different metric representations, with
respect to $W$.
A resolving set $W$ for $G$ with
minimum cardinality is  a {\it metric basis} of $G$, and its
cardinality is the {\it metric dimension} of $G$, denoted by
$\dim(G)$.
\par The concepts of resolving
sets and metric
dimension of a graph
 were introduced independently by Slater~\cite{Slater1975}
and by Harary and Melter~\cite{Harary}.
 Resolving sets have several applications in diverse areas such as  coin weighing problems~\cite{coin}, network
discovery and verification~\cite{net2}, robot
navigation~\cite{landmarks}, mastermind game~\cite{cartesian
product}, problems of pattern recognition and image
processing~\cite{digital}, and combinatorial search and
optimization~\cite{coin}.
For more results about  resolving sets and metric dimension see~\cite{baily,K dimensional,cartesian product,Ollerman,extermal}.
\par During the study
of the metric dimension of the cartesian product of graphs Caceres et al.~\cite{cartesian product} defined the concept
of doubly resolving sets in graphs.
 Two vertices $u,v$ in a  graph $G$ are doubly resolved by $x,y\in V(G)$ if
$$d(v,x)-d(u,x)\neq d(v,y)-d(u,y).$$
 A set $W$ of vertices of the graph $G$ is
a doubly resolving set for $G$ if every two distinct vertices of $G$ are doubly resolved by some two vertices of $W$.
Every graph with at least two vertices has a doubly resolving set.
A doubly resolving set for $G$ with minimum cardinality is called a doubly basis of $G$ and
 its cardinality is called the doubly resolving number of $G$ and
denoted by
$\psi(G)$.
\par Caceres et al.~\cite{cartesian product} obtained  doubly resolving number of
trees, cycles and complete graphs.
In~\cite{computing minimal doubl} it was proved that the problem of finding doubly bases is NP-hard.
Doubly resolving number of Prism graphs and Hamming graphs is computed in \cite{prism} and  \cite{Hamming}, respectively.
For more results about doubly resolving sets in graphs see\cite{necklace,cartesian product,computing minimal doubl,cocktail}.
\par Caceres et al.~\cite{cartesian product} obtained an upper bound for the metric dimension of cartesian product of graphs $G$ and $H$
in terms of $\psi(H)$ and $\dim(G)$.
They also obtained a lower bound for the metric dimension of the cartesian product of a graph $G$ with itself in terms of $\psi(G)$.
Hence computing doubly resolving number of graphs is useful for computing  metric dimension of
cartesian product of graphs. Moreover, studying doubly resolving sets is interesting by itself.
It is clear that for each graph of order at least $2$, we have $\psi(G)\geq2$. Caseres et al. found the following upper bound for $\psi(G)$.
\begin{lemma}\label{y(G)<n-1}{\rm\cite{cartesian product}}
For every graph $G$ with $n\geq3$ vertices we have $\psi(G)\leq n-1$.
\end{lemma}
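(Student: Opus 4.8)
The plan is to produce an explicit doubly resolving set of size $n-1$ by deleting a single suitably chosen vertex $w$ from $V(G)$ and showing that $W=V(G)\setminus\{w\}$ still doubly resolves every pair. It is convenient to record that a pair $\{u,v\}$ is doubly resolved by $x,y$ exactly when $d(v,x)-d(u,x)$ differs from $d(v,y)-d(u,y)$; in particular I will repeatedly use the function $g(z)=d(w,z)-d(u,z)$ and note that, since $|W|=n-1\geq 2$, the pair $\{u,w\}$ is doubly resolved by $W$ precisely when $g$ is non-constant on $W$.

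First I would dispose of the easy pairs. If $u,v\in W$ (so neither is $w$), then taking $x=u$ and $y=v$ gives $d(v,u)-d(u,u)=d(u,v)$ and $d(v,v)-d(u,v)=-d(u,v)$, which are distinct since $d(u,v)>0$; hence every pair avoiding $w$ is doubly resolved by $W$, regardless of the choice of $w$. The only pairs that can fail are therefore those of the form $\{u,w\}$.

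The heart of the argument is to choose $w$ so that these remaining pairs are also handled, and this is the step I expect to be the main obstacle. For a fixed $u\neq w$, suppose $g$ were constant on $W$. Evaluating at $z=u$ gives the constant value $g(u)=d(w,u)=:k>0$. Evaluating instead at any neighbour $z_0$ of $w$ gives $1-d(u,z_0)=k$, which forces $k=1$ and $d(u,z_0)=0$, i.e. $z_0=u$. Thus if $\{u,w\}$ is not doubly resolved, then every neighbour of $w$ coincides with $u$, so $w$ is a leaf with unique neighbour $u$. Contrapositively, if $w$ has degree at least $2$ then $W=V(G)\setminus\{w\}$ also doubly resolves every pair involving $w$, and combined with the previous paragraph $W$ is a doubly resolving set. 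Finally I would observe that a connected graph on $n\geq 3$ vertices always has a vertex of degree at least $2$ (for instance an internal vertex of any spanning tree), so such a $w$ exists and therefore $\psi(G)\leq |W|=n-1$.
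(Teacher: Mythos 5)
Your proof is correct. Note that the paper itself gives no proof of this lemma --- it is imported from C\'aceres et al.~\cite{cartesian product} --- so there is nothing in the text to compare against line by line; judged on its own, your argument is complete. The verification that any two vertices $x,y\in W$ doubly resolve themselves is exactly the first half of the paper's Lemma~\ref{chek W doubly resolving}, and your analysis of the pairs $\{u,w\}$ is a self-contained instance of its second half: if $u\in W$ and $w\notin W$ are not doubly resolved by $W$, that lemma says every $z\in W$ admits a shortest $w$--$z$ path through $u$; applying this to a neighbour $z_0$ of $w$ (a path of length $1$) forces $z_0=u$, which is precisely your conclusion that $w$ would have to be a leaf hanging off $u$. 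Your choice of $w$ as a vertex of degree at least $2$ (which exists in any connected graph on $n\geq 3$ vertices) then finishes the proof cleanly. The only cosmetic remark is that you could have shortened the argument by invoking Lemma~\ref{chek W doubly resolving} directly instead of re-deriving it via the function $g$, but the computation you give is correct as written, including the step $1-d(u,z_0)=k\geq 1\Rightarrow d(u,z_0)=0$.
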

Also, through the  following three lemmas,
they  found the  doubly resolving number  of complete graphs, paths and cycles.
\begin{lemma}\label{y(Kn)=n-1}{\rm\cite{cartesian product}}
For all $n\geq2$ we have $\psi(K_n)=\max\{n-1,2\}$.
\end{lemma}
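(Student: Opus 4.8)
The plan is to exploit the rigid distance structure of $K_n$, where $d(a,b)=1$ for all distinct $a,b$ and $d(a,a)=0$. First I would compute, for a fixed pair of distinct vertices $u,v$ and an arbitrary vertex $x$, the quantity $d(v,x)-d(u,x)$, observing that it equals $1$ when $x=u$, equals $-1$ when $x=v$, and equals $0$ otherwise. Thus, as $x$ ranges over a candidate set $W$, the map $x\mapsto d(v,x)-d(u,x)$ takes values only in $\{-1,0,1\}$, with the nonzero values attained exclusively at $u$ and at $v$.

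The heart of the argument is the following equivalence: for a set $W$ with $|W|\geq 2$, the pair $\{u,v\}$ is doubly resolved by $W$ precisely when $W\cap\{u,v\}\neq\emptyset$. Indeed, if neither $u$ nor $v$ lies in $W$, then $d(v,x)-d(u,x)=0$ for every $x\in W$, so no two members of $W$ yield different differences and the pair is not doubly resolved. Conversely, if (say) $u\in W$, then taking $x=u$ gives difference $1$, while any second vertex $y\in W\setminus\{u,v\}$ gives difference $0$ (and if no such $y$ exists then $v\in W$ as well, giving difference $-1$); in either case the two differences disagree.

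From this equivalence I would deduce that a doubly resolving set $W$ must meet every pair of distinct vertices, that is, $V(K_n)\setminus W$ can contain at most one vertex, since two omitted vertices would form an unresolved pair. Hence $|W|\geq n-1$, giving $\psi(K_n)\geq n-1$. Combined with the upper bound $\psi(K_n)\leq n-1$ of Lemma~\ref{y(G)<n-1} (valid for $n\geq 3$), this yields $\psi(K_n)=n-1$ for $n\geq 3$. Finally I would handle $n=2$ directly: a single vertex cannot doubly resolve anything, whereas the two-vertex set resolves the unique pair because its differences are $1$ and $-1$; thus $\psi(K_2)=2$. Assembling the two ranges produces $\psi(K_n)=\max\{n-1,2\}$.

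I do not anticipate a serious obstacle in this proof; the only points requiring care are phrasing the constancy condition correctly, so that $|W|\geq 2$ guarantees two distinct witnesses are available, and packaging the case $n=2$ so that the $\max$ in the statement emerges cleanly.
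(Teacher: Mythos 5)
Your proof is correct. Note that the paper itself gives no proof of this lemma --- it is quoted from Caceres et al.\ \cite{cartesian product} --- so there is nothing in-text to compare against; your self-contained argument (the pair $\{u,v\}$ is doubly resolved in $K_n$ exactly when $W$ meets $\{u,v\}$, hence at most one vertex may be omitted, combined with the general upper bound $\psi(G)\le n-1$ and the direct check for $n=2$) is sound and complete, and is also consistent with the paper's own machinery, since all vertices of $K_n$ are pairwise twins and Proposition~\ref{twins} would yield the same lower bound.
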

\begin{lemma}\label{Y(Pn)=2}{\rm\cite{cartesian product}}
For each $n\geq2$, $\psi(P_n)=2$.
\end{lemma}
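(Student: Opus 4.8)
The plan is to establish the matching bounds $\psi(P_n)\geq 2$ and $\psi(P_n)\leq 2$. The lower bound needs no work: as observed in the excerpt, every graph of order at least $2$ has doubly resolving number at least $2$, and $P_n$ has $n\geq 2$ vertices, so $\psi(P_n)\geq 2$.

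For the upper bound I would exhibit a single doubly resolving set of cardinality $2$, the natural candidate being the pair of leaves $W=\{v_1,v_n\}$. The computational engine is the elementary distance formula on a path: for every vertex $v_k$ one has $d(v_k,v_1)=k-1$ and $d(v_k,v_n)=n-k$. These two identities reduce the doubly resolving condition to a one-line arithmetic comparison.

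Concretely, for any two distinct vertices $v_i,v_j$ I would take $x=v_1$ and $y=v_n$ in the definition and compute
$$d(v_j,x)-d(v_i,x)=(j-1)-(i-1)=j-i, \qquad d(v_j,y)-d(v_i,y)=(n-j)-(n-i)=i-j.$$
Since the two differences are negatives of one another, they coincide only when $i=j$; as $v_i\neq v_j$ this cannot happen, so $\{v_1,v_n\}$ doubly resolves every pair and hence $\psi(P_n)\leq 2$. There is no genuine obstacle here—this is a warm-up computation—and the one point deserving a moment's attention is simply that the sign reversal between $j-i$ and $i-j$ secures the separation uniformly, with no exceptional pairs to handle. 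Combining the two bounds yields $\psi(P_n)=2$.
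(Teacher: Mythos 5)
Your proof is correct: the lower bound $\psi(P_n)\geq 2$ is immediate, and the endpoint computation $d(v_j,v_1)-d(v_i,v_1)=j-i\neq i-j=d(v_j,v_n)-d(v_i,v_n)$ for $i\neq j$ cleanly establishes that the two leaves form a doubly resolving set. The paper itself states this lemma only as a cited result from Caceres et al.\ without reproducing a proof, and your argument is exactly the standard one, so there is nothing further to compare.
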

\begin{lemma}\label{Y(Cn)}{\rm\cite{cartesian product}}
Let $C_n$ be a cycle of order $n$. Then
$$\psi(C_n)=\left\{
\begin{array}{ll}
2 &  {\rm if~}n~{\rm is~odd}, \\
3 &  {\rm if~}n~{\rm is~even}.
\end{array}\right.$$
\end{lemma}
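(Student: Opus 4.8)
The plan is to translate ``doubly resolving'' into injectivity of a simple integer-valued map and then exhibit explicit optimal sets. Label the vertices of $C_n$ by $\{0,1,\dots,n-1\}$ cyclically, so that $d(i,j)=\min(|i-j|,\,n-|i-j|)$. First I would record the elementary reformulation: a set $W$ is doubly resolving if and only if the map $w\mapsto(d(w,a))_{a\in W}$ is injective modulo the all-ones vector; equivalently, fixing a reference $z\in W$, the set $W$ is doubly resolving exactly when $w\mapsto(d(w,a)-d(w,z))_{a\in W\setminus\{z\}}$ is injective on $V(C_n)$. In particular, a two-element set $\{x,y\}$ is doubly resolving if and only if $f(w):=d(w,x)-d(w,y)$ is injective; and since every graph on at least two vertices has $\psi\ge 2$, it suffices to decide, in each parity, whether such an $f$ (or its three-variable analogue) can be made injective. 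To rule out the value $2$ when $n=2k$ is even, I would use that $C_n$ is then bipartite, so $d(w,x)+d(w,y)$ has the same parity as $d(x,y)$ for every $w$; hence $f(w)=d(w,x)-d(w,y)$ has constant parity and satisfies $|f(w)|\le d(x,y)\le k$, so $f$ takes at most $d(x,y)+1\le k+1$ values. As $k+1<2k$ for $k\ge 2$, no such $f$ is injective and $\psi(C_{2k})\ge 3$.

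For the odd case $n=2k+1$, I would take $W=\{0,k\}$ (two vertices at the maximum distance $k$) and show $f$ is a bijection onto $\{-k,\dots,k\}$. Splitting the cycle into the two arcs joining $0$ and $k$, a direct computation gives $f(w)=2w-k$ on the arc $w\in\{0,\dots,k\}$ and $f(w)=3k+1-2w$ on the complementary arc $w\in\{k+1,\dots,2k\}$. The first arc contributes every integer in $[-k,k]$ of the same parity as $k$, each once, while the second contributes every integer in $[-(k-1),k-1]$ of the opposite parity, each once; these two value sets are disjoint by parity and together exhaust $\{-k,\dots,k\}$. Thus $f$ is injective, $\{0,k\}$ is a doubly resolving set, and $\psi(C_{2k+1})=2$.

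For the even upper bound I would exhibit the three-element set $W=\{0,1,k\}$ in $C_{2k}$ and test injectivity of $h(w)=(p(w),q(w))$ with $p(w)=d(w,0)-d(w,k)$ and $q(w)=d(w,1)-d(w,k)$. Since $0$ and $k$ are antipodal, $p$ is symmetric under the reflection $w\mapsto 2k-w$ fixing $0$ and $k$: one computes $p(w)=2w-k$ on $\{0,\dots,k\}$ and $p(w)=3k-2w$ on $\{k+1,\dots,2k-1\}$, so the only coincidences $p(u)=p(v)$ with $u\ne v$ occur for the reflection pairs $(j,\,2k-j)$ with $1\le j\le k-1$. It then remains to check that the second coordinate separates each such pair, and indeed $q(j)=2j-1-k$ while $q(2k-j)=2j+1-k$, which differ by $2$. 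Hence $h$ is injective, $\{0,1,k\}$ is doubly resolving, and with the lower bound $\psi(C_{2k})=3$. I expect the main obstacle to be exactly this last bookkeeping in the even case: correctly identifying that the antipodal pair $\{0,k\}$ leaves precisely the reflection pairs unresolved and verifying that the single extra vertex $1$ breaks every one of them; the odd case and the even lower bound are comparatively routine once the parity structure is in place.
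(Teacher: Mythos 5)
Your proof is correct, but note that the paper itself offers no proof of this lemma: it is quoted verbatim from Caceres et al.\ \cite{cartesian product}, so there is no internal argument to compare yours against. As a self-contained proof your write-up checks out. The reformulation (a pair $\{x,y\}$ is doubly resolving iff $f(w)=d(w,x)-d(w,y)$ is injective, and in general one may drop to the differences against a fixed reference vertex $z\in W$) is the right way to organize the verification. The even lower bound via bipartiteness is clean: $f$ has constant parity and $|f(w)|\le d(x,y)\le k$, so it takes at most $k+1<2k$ values. The explicit computations also check out: in $C_{2k+1}$ with $W=\{0,k\}$ the two arcs contribute the values $\{2w-k: 0\le w\le k\}$ and $\{3k+1-2w: k+1\le w\le 2k\}$, which by parity partition $\{-k,\dots,k\}$ exactly, giving injectivity; in $C_{2k}$ with $W=\{0,1,k\}$ the only failures of $p(w)=d(w,0)-d(w,k)$ are the reflection pairs $(j,2k-j)$, $1\le j\le k-1$, and $q(j)=2j-1-k\ne 2j+1-k=q(2k-j)$ separates each of them. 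The only cosmetic caveat is the boundary case $k=1$ in the odd family ($C_3$), where $\{0,1\}$ still works by direct inspection, so nothing is lost.
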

\par In this paper we investigate  doubly resolving sets for graphs. In Section~\ref{results},
 some properties of doubly resolving bases of graphs are presented,
an upper bound for $\psi(G)$ in terms of diameter and order of $G$ is obtained and
the doubly resolving number of complete bipartite graphs  is computed. In Section~\ref{n-1},
all $n$-vertex graphs $G$ with $\psi(G)=n-1$ are determined. In section~\ref{unicyclic}, doubly resolving sets of unicyclic graphs
are investigated and it is proved  that the difference between
the number of leaves and  doubly resolving number of a unicyclic graph is at most $2$.
\section{Some results on doubly resolving sets}\label{results}
In this section,  we present some results about doubly resolving sets of graphs. We obtain the doubly resolving number of some famous
families of graphs. An important upper bound for doubly resolving number of graphs is obtained in terms of order and diameter of the graph.
\par
By the following lemma, to check whether a given set $W\subseteq V(G)$ is a doubly resolving set, it does not need to consider the pair
of vertices that both of them are in $W$.
\begin{lemma}\label{chek W doubly resolving}
Let $W$ be a subset of size at least $2$ of $V(G)$. If $x,y\in W$, then $x,y$ are doubly resolved by  $x,y\in W$.
Also if $a\in W$ and $b\notin W$ are two vertices in $G$ that are not doubly resolved by any pair of vertices in $W$, then
for each $w\in W$
there exists a shortest path between $b$ and $w$ that contains $a$.
\end{lemma}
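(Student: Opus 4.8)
The plan is to handle the two assertions separately, both by direct computation from the definition of doubly resolving. For the first assertion, I would take the pair $\{x,y\}$ itself as the pair of vertices to be resolved and check that $x,y\in W$ resolve it. Substituting $u=x$ and $v=y$ into the defining inequality, the left-hand side becomes $d(y,x)-d(x,x)=d(x,y)$ and the right-hand side becomes $d(y,y)-d(x,y)=-d(x,y)$. Since $x$ and $y$ are distinct vertices, $d(x,y)>0$, so these two quantities are unequal; hence $x,y$ are doubly resolved by $x,y$. This step is immediate and requires no further work.

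For the second assertion, the starting point is to translate the hypothesis that $a$ and $b$ are not doubly resolved by any pair of vertices of $W$. Negating the defining inequality for every choice of resolvers $x,y\in W$ shows that the quantity $d(b,w)-d(a,w)$ takes the same value for all $w\in W$; call this common value $c$. The key move is to evaluate this constant at $w=a$, which is legitimate because $a\in W$: this gives $c=d(b,a)-d(a,a)=d(a,b)$. Consequently $d(b,w)-d(a,w)=d(a,b)$, that is, $d(b,w)=d(a,b)+d(a,w)$, for every $w\in W$.

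Finally, I would interpret this equality geometrically. By the triangle inequality one always has $d(b,w)\le d(a,b)+d(a,w)$, and the equality just obtained means this bound is tight. Concatenating a shortest $b$--$a$ path (of length $d(a,b)$) with a shortest $a$--$w$ path (of length $d(a,w)$) produces a $b$--$w$ walk of length exactly $d(b,w)$, which must therefore be a shortest $b$--$w$ path; by construction it passes through $a$. This yields the desired shortest path containing $a$ for each $w\in W$.

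I do not anticipate a serious obstacle here, as both parts reduce to manipulating the defining inequality. The one point that needs care is the choice $w=a$ to pin down the constant $c$: without using that $a$ itself lies in $W$, one could not identify $c$ with $d(a,b)$, and the triangle-equality conclusion would not follow.
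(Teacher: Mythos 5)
Your proposal is correct and follows essentially the same route as the paper: the same direct computation for the first assertion, and for the second the same derivation of $d(b,w)=d(b,a)+d(a,w)$ from the non-resolving hypothesis applied with $a$ itself as one of the two resolvers, followed by concatenating shortest $b$--$a$ and $a$--$w$ paths. Your remark that choosing $w=a$ is the essential step is exactly the point the paper's proof relies on as well.
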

\begin{proof}{
Let $x,y\in W$. Then $d(x,x)-d(y,x)=-d(x,y)\neq d(x,y)=d(x,y)-d(y,y).$ That is $x,y$ are doubly resolved by $x,y$.
Now let $a\in W$, $b\in V(G)\setminus W$ and $a,b$ are not doubly resolved by any pair of vertices in $W$. For each $w\in W$,
we have
$d(a,w)-d(b,w)=d(a,a)-d(b,a)$, which implies $d(b,w)=d(b,a)+d(a,w)$. If $P$ and $Q$ are shortest paths
between $b,a$ and $a,w$, respectively, then $P\cup Q$ is a path between $b,w$ with length $d(b,w)$.
Therefore $P\cup Q$ is a shortest path between $b,w$ that contains $a$.
}\end{proof}

 Two distinct vertices $u,v$ are said to be {\it twins}
if $N(v)\backslash\{u\}=N(u)\backslash\{v\}$. It is
clear that if $u,v$ are twin vertices in $G$, then for every vertex $x\in V(G)$, $d(u,x)=d(v,x)$.
\begin{pro}\label{twins}
Suppose that $u,v$ are twins in a  graph $G$ and $W$ is a doubly resolving set for $G$.
Then at least on of the vertices $u$ and $v$ is in $W$. Moreover, if $u\in W$ and $v\notin W$,
then $(W\setminus\{u\})\cup \{v\}$ is also a doubly resolving set for $G$.
\end{pro}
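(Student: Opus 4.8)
The plan is to handle the two assertions separately, using the distance identity for twins recorded just above the statement (valid for any vertex distinct from $u$ and $v$).

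For the first assertion, I would argue by contradiction. Suppose neither $u$ nor $v$ lies in $W$. Since $u\neq v$ and $W$ is a doubly resolving set, there are two (necessarily distinct) vertices $x,y\in W$ that doubly resolve $u$ and $v$, i.e. $d(v,x)-d(u,x)\neq d(v,y)-d(u,y)$. But $x,y\in W$ forces $x,y\notin\{u,v\}$, and for such vertices the twin property gives $d(u,x)=d(v,x)$ and $d(u,y)=d(v,y)$. Hence both differences equal $0$, contradicting the inequality. Therefore at least one of $u,v$ belongs to $W$.

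For the second assertion, set $W'=(W\setminus\{u\})\cup\{v\}$, where $u\in W$ and $v\notin W$. The key observation I would establish is that the transposition $\sigma$ of $V(G)$ which swaps $u$ and $v$ and fixes every other vertex is an automorphism of $G$. This follows directly from the twin condition $N(v)\setminus\{u\}=N(u)\setminus\{v\}$: an edge incident to neither $u$ nor $v$ is fixed, the pair $\{u,v\}$ (if it is an edge) is fixed setwise, and an edge $\{u,a\}$ with $a\notin\{u,v\}$ satisfies $a\in N(u)\setminus\{v\}=N(v)\setminus\{u\}$, so $\{v,a\}=\{\sigma(u),\sigma(a)\}$ is again an edge (and symmetrically for edges at $v$). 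Since $\sigma$ is an involutive automorphism, it preserves distances: $d(\sigma(p),\sigma(q))=d(p,q)$ for all $p,q$.

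It remains to transport the doubly resolving property along $\sigma$. Because $v\notin W$, the automorphism $\sigma$ fixes every vertex of $W\setminus\{u\}$ and sends $u$ to $v$, so $\sigma(W)=W'$. Now take any two distinct vertices $p,q$. Since $W$ doubly resolves the distinct pair $\sigma(p),\sigma(q)$, choose $x,y\in W$ with $d(\sigma(q),x)-d(\sigma(p),x)\neq d(\sigma(q),y)-d(\sigma(p),y)$; rewriting each distance via $d(\sigma(q),x)=d(q,\sigma(x))$ (using that $\sigma$ is a distance-preserving involution) shows that $\sigma(x),\sigma(y)\in W'$ doubly resolve $p,q$. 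As $p,q$ were arbitrary, $W'$ is a doubly resolving set. The routine parts (the contradiction above and the adjacency check for $\sigma$) are short; the main conceptual step is recognizing that swapping twins is a distance-preserving automorphism, which is exactly what legitimizes the swap. An alternative to the automorphism language is a direct case analysis on whether $p$ or $q$ lies in $\{u,v\}$, but the automorphism viewpoint packages all cases at once.
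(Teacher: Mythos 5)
Your proof is correct, but the second half takes a genuinely different route from the paper. For the first assertion your argument is the same as the paper's: if $u,v\notin W$ then every $x\in W$ satisfies $d(u,x)=d(v,x)$, so no pair of $W$ doubly resolves $u,v$. For the second assertion the paper argues directly: if $W'=(W\setminus\{u\})\cup\{v\}$ failed, some pair $x,y$ would be doubly resolved by $u,w$ but not by $v,w$, and it then substitutes $d(x,v)-d(y,v)=d(x,u)-d(y,u)$ to get a contradiction. That substitution uses the twin identity $d(z,u)=d(z,v)$, which is only valid for $z\notin\{u,v\}$, so the paper's computation silently assumes the unresolved pair $x,y$ is disjoint from $\{u,v\}$; the remaining cases need (easy) separate treatment. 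Your approach instead observes that the transposition $\sigma$ swapping the twins is a distance-preserving automorphism with $\sigma(W)=W'$, and transports the doubly resolving property of $W$ for the pair $\sigma(p),\sigma(q)$ to the property of $W'$ for $p,q$. This packages all cases at once, including $p$ or $q$ lying in $\{u,v\}$, at the modest cost of the adjacency check that $\sigma$ is an automorphism. In short: the paper's route is shorter but leaves a small case unaddressed; yours is slightly longer but uniform and complete.
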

\begin{proof}{
Let $u,v$ be twins, $W$ be a doubly resolving set and $u,v\notin W$. Then for each $a,b\in W$,
$$d(u,a)-d(v,a)=0=d(u,b)-d(v,b).$$ This contradiction implies that $W$ must contain at least one of $u$ or $v$.
Now let $W$ be a doubly resolving set for $G$ such that  $u\in W$ and $v\notin W$. If
$(W\setminus\{u\})\cup \{v\}$ is not a doubly resolving set for $G$, then there are vertices $x,y\in V(G)$
and $w\in W$ such
that $x,y$ are doubly resolved by $u,w$ and are not doubly resolved by $v,w$. But
$$ d(x,v)-d(y,v)=d(x,u)-d(y,u)\neq d(x,w)-d(y,w).$$
This means $(W\setminus\{u\})\cup \{v\}$ is  a doubly resolving set for $G$.
}\end{proof}
One of the well-known families of graphs is the family of complete bipartite graphs. By the next lemma
we compute the doubly resolving number of complete bipartite graphs.
\begin{lemma}\label{Y(K r,s)}
Let $K_{r,s}$ be a complete bipartite graph of order $n\geq3$ and $r\leq s$. Then
$$\psi(K_{r,s})=\left\{
\begin{array}{ll}
n-1 & ~~~~ {\rm if~}r\leq 2, \\
n-2 & ~~~~ {\rm if~}r>2.
\end{array}\right.$$
\end{lemma}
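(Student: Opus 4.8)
The plan is to work with the natural bipartition $A=\{a_1,\dots,a_r\}$ and $B=\{b_1,\dots,b_s\}$ of $K_{r,s}$, where $n=r+s$, exploiting the rigid distance structure: $d(x,y)=1$ when $x,y$ lie in different parts, and $d(x,y)=2$ when they are distinct vertices of the same part. The first observation I would record is that any two vertices of the same part are twins, so $A$ is a twin class when $r\ge 2$ and $B$ is a twin class when $s\ge 2$. For a pair of distinct vertices $u,v$ it is convenient to study the function $f_{u,v}(w)=d(u,w)-d(v,w)$; a set $W$ doubly resolves $u,v$ precisely when $f_{u,v}$ is non-constant on $W$, and this reformulation will drive every verification below.

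For the lower bound I would invoke Proposition~\ref{twins}: a doubly resolving set must meet each twin class in all but at most one vertex, so $|W\cap A|\ge r-1$ and $|W\cap B|\ge s-1$ (the former being trivial when $r=1$). In every case this yields $|W|\ge n-2$, which already settles the easy direction when $r>2$.

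For the case $r>2$ (so $r,s\ge 3$) I would exhibit the explicit set $W=V(G)\setminus\{a_1,b_1\}$ of size $n-2$ and check that it is doubly resolving. By Lemma~\ref{chek W doubly resolving} only pairs meeting $\{a_1,b_1\}$ need attention, and there are five essentially distinct types: $(a_1,b_1)$, $(a_1,a_j)$, $(a_1,b_j)$, $(b_1,a_j)$, $(b_1,b_j)$ with $j\ge 2$. For each I would compute $f$ on $W$ from the distance rule and observe that it takes at least two values; crucially, having three vertices in each part guarantees a "spare" vertex (some $b_\ell$ with $\ell\ne j$, or some $a_\ell$ with $\ell\ne j$) on which $f$ differs. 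Combined with the lower bound this gives $\psi(K_{r,s})=n-2$.

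The heart of the argument, and the step I expect to be most delicate, is the case $r\le 2$, where I must show that no doubly resolving set of size $n-2$ exists, forcing $\psi=n-1$ by Lemma~\ref{y(G)<n-1}. Here the twin constraints leave essentially one shape for a candidate $W$ of size $n-2$: when $r=1$ it must be $B\setminus\{b_k\}$, and when $r=2$ it must omit one vertex of $A$ and one leaf $b_k$. In each situation I would exhibit a single bad pair whose difference function is constant on $W$: for $r=1$ the pair $(a_1,b_k)$ gives $f\equiv -1$ on $B\setminus\{b_k\}$, and for $r=2$ the pair $(b_k,a_2)$ (the omitted leaf against the surviving vertex of $A$) gives $f\equiv 1$ on $W$. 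Isolating exactly this pair is the subtle point, since the neighbouring pairs are in fact resolved; it is precisely the collision between an omitted leaf and the remaining part-$A$ vertex that cannot be detected from within $W$, and this is what separates the threshold $r>2$ from $r\le 2$.
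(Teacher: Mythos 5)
Your proposal is correct and follows essentially the same route as the paper: the lower bound $|W|\ge n-2$ via the twin classes and Proposition~\ref{twins}, an explicit set of size $n-2$ verified pair-by-pair (using the spare vertex guaranteed by $r,s\ge 3$) when $r>2$, and for $r\le 2$ the observation that the forced shape of any candidate of size $n-2$ leaves exactly the pair consisting of the omitted $B$-vertex and the surviving $A$-vertex with a constant difference function. The only cosmetic difference is that the paper dispatches $K_{1,2}=P_3$ by citing Lemma~\ref{Y(Pn)=2} while your argument absorbs it uniformly.
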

\begin{proof}{
Suppose that $K_{r,s}$ be a complete bipartite graph with partite sets $X,Y$ such that $|X|=r$ and $|Y|=s$. Since $n\geq3$
and $r\leq  s$, we have $s\geq2$. Let $W$ be a doubly
resolving set for $K_{r,s}$. One of the following three cases can be arisen.
\par Case 1. $r=1$. If $s=2$, then by Lemma~\ref{Y(Pn)=2},  $\psi (K_{1,2})=2=n-1$. Let $s\geq3$. Then
  all vertices in $Y$ are twins and by Proposition~\ref{twins}, $|W\cap Y|\geq s-1$.
 Hence $|W|\geq n-2$. If $|W|= n-2$, then there exists a vertex
 $y_0\in Y\setminus W$. Now, for each $w\in W$,
 $d(y_0,w)-d(x_0,w)=1$, where $X=\{x_0\}$. That means $W$ is not a doubly resolving set.
 This contradiction yields $\psi(K_{1,s})=s=n-1$.
 \par Case 2. $r=2$. In this case all vertices in $X$ are twins, also all vertices in $Y$ are twins.
  Hence by Proposition~\ref{twins}, $|W\cap Y|\geq s-1$ and $|W\cap X|\geq 1$. Therefore
  $|W|\geq n-2$. If $|W|= n-2$, then $ X\cap W=\{x_1\}$ and there exists a vertex
 $y_0\in Y\setminus W$. Note that for each $y\in W\cap Y$, we have $d(y_0,y)-d(x_1,y)=2-1=1$
 and  $d(y_0,x_1)-d(x_1,x_1)=1-0=1$, which is impossible. Therefore, $\psi(K_{2,s})=s+1=n-1$.
 \par Case 3. $r\geq3$. In this case all vertices in $X$ are twins, also all vertices in $Y$ are twins.
  Hence by Proposition~\ref{twins}, $|W\cap Y|\geq s-1$ and $|W\cap X|\geq r-1$. Therefore
  $|W|\geq n-2$. Let $W$ be a set of vertices of size $n-2$ such that $|W\cap Y|= s-1$ and $|W\cap X|= r-1$.
  Consider $a,b\in V(K_{r,s})$. If $a,b\in Y$, then  by Lemma~\ref{chek W doubly resolving} we can assume
  that $a\in W$ and $b\notin W$. Since $s\geq3$, there exists a vertex $a\neq w\in W\cap Y$ and we have
  $d(a,a)-d(b,a)=-2$ and $d(a,w)-d(b,w)=0$. Hance $a,b$ are doubly resolved by $a,w$. If $a,b\in X$, by a same argument we
  deduce that $a,b$ are doubly resolved by a pair of vertices in $W$. Now let $a\in X$ and $b\in Y$. Since $r,s$ are at least $3$, there exists
  vertices $x\in W\cap X\setminus\{a\}$ and $y\in W\cap Y\setminus\{b\}$. Note that
  $$d(a,x)-d(b,x)=2-1\neq 1-2=d(a,y)-d(b,y).$$
  Therefore $W$ is a doubly resolving set for $K_{r,s}$ and $\psi(K_{r,s})=n-2$.
}\end{proof}
Through the following theorem we obtain an upper bound for $\psi(G)$ in terms of order and diameter of the graph $G$.
\begin{thm}\label{Y(G)<n-d+1}
If $G$ is a graph with diameter $d$, then $\psi(G)\leq n-d+1$.
\end{thm}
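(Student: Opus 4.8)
The plan is to build an explicit doubly resolving set of the required size from a diametral shortest path. First I would fix two vertices $u,v$ with $d(u,v)=d$ together with a shortest $u$--$v$ path $P\colon u=x_0,x_1,\ldots,x_d=v$; since $P$ is geodesic, its subpaths are geodesic too, so $d(x_i,u)=i$ and $d(x_i,v)=d-i$ for every $i$. I then take
$$W=\bigl(V(G)\setminus V(P)\bigr)\cup\{u,v\}.$$
Because $P$ has exactly $d+1$ vertices and we reinsert its two endpoints, $|W|=(n-d-1)+2=n-d+1$, so it suffices to prove that $W$ is a doubly resolving set.

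By Lemma~\ref{chek W doubly resolving} any pair of vertices both lying in $W$ is doubly resolved by itself, so I only need to treat pairs in which at least one vertex is an internal vertex of $P$ (these are precisely the vertices missing from $W$). The first, easy, case is a pair $x_i,x_j$ of distinct path vertices: here $u,v\in W$ work, since $d(x_i,u)-d(x_j,u)=i-j$ while $d(x_i,v)-d(x_j,v)=j-i$, and $i\neq j$ forces $i-j\neq j-i$.

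The crux is the remaining case, an internal vertex $x_i$ (so $x_i\notin W$) paired with some $b\in W$. Here I would argue by contradiction using the second half of Lemma~\ref{chek W doubly resolving}: if $x_i$ and $b$ were doubly resolved by no pair in $W$, then for every $w\in W$ there is a shortest $x_i$--$w$ path through $b$, that is, $d(x_i,w)=d(x_i,b)+d(b,w)$. Applying this to $w=u$ and to $w=v$ and adding the two equalities gives
$$d=d(x_i,u)+d(x_i,v)=2\,d(x_i,b)+\bigl(d(b,u)+d(b,v)\bigr)\geq 2\,d(x_i,b)+d,$$
where the last inequality is the triangle inequality $d(b,u)+d(b,v)\geq d(u,v)=d$. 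This forces $d(x_i,b)=0$, i.e. $b=x_i$, contradicting $b\in W$ and $x_i\notin W$. Hence every such pair is doubly resolved, $W$ is a doubly resolving set, and $\psi(G)\leq|W|=n-d+1$.

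I expect this last case to be the main obstacle, since it is the only place where one must rule out a bad pair rather than exhibit a resolving pair directly; the combination of the geodesic identity $d(x_i,u)+d(x_i,v)=d$ with the triangle inequality is exactly what collapses the putative counterexample. Everything else is bookkeeping, and it is reassuring that the bound is sharp for $P_n$, where $n=d+1$ gives $|W|=2=\psi(P_n)$.
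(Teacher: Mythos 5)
Your proposal is correct and follows essentially the same route as the paper: the same set $W$ (all vertices except the internal vertices of a diametral geodesic), the same use of the endpoints $u,v$ to resolve pairs of path vertices, and the same contradiction $d\geq 2\,d(x_i,b)+d$ for a mixed pair, which the paper obtains by combining the two non-resolution equalities with the triangle inequality rather than by invoking Lemma~\ref{chek W doubly resolving} explicitly. The two arguments are the same computation organized slightly differently.
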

\begin{proof}{
Let $P=(v_0,v_1,\ldots,v_d)$ be a shortest path in $G$. We claim that $W=V(G)\setminus\{v_1,v_2,\ldots,v_{d-1}\}$
is a doubly resolving set for $G$.
If $v_i,v_j\in V(G)\setminus W$, then
$$d(v_i,v_0)-d(v_j,v_0)=i-j\neq j-i=(d-i)-(d-j)=d(v_i,v_d)-d(v_j,v_d).$$ Therefore $v_0$ and $v_d$ doubly resolve $v_i$ and $v_j$.
If $v_i\in V(G)\setminus W$ and $a\in W$ are two vertices that are not doubly resolved by $W$, then
$$d(a,a)-d(v_i,a)=d(a,v_0)-d(v_i,v_0) \Rightarrow d(v_0,a)=d(v_i,v_0)-d(v_i,a),$$
Since $a,v_i$ are not doubly resolved by $a,v_0$. Also
$$d(a,a)-d(v_i,a)=d(a,v_d)-d(v_i,v_d) \Rightarrow d(a,v_d)=d(v_i,v_d)-d(v_i,a),$$
Since $a,v_i$ are not doubly resolved by $a,v_d$. Hence,
$$ d(v_0,v_d)\leq d(v_0,a)+d(a,v_d)=d(v_i,v_0)-d(v_i,a)+d(v_i,v_d)-d(v_i,a)=d(v_0,v_d)-2d(v_i,a).$$
This contradiction implies that $W$ is a doubly resolving set for $G$ and $\psi(G)\leq |W|=n-d+1$.
}\end{proof}
By Lemma~\ref{Y(Pn)=2} for every $n\geq2$, $\psi(P_n)=2=n-\rm diam(P_n)+1$. Therefore the bound in Theorem~\ref{Y(G)<n-d+1}
is sharp.
\section{Graphs of order $n$ and doubly resolving number $n-1$}\label{n-1}
The aim of this section is to determine all $n$-vertex
graphs $G$ with $\psi(G)=n-1$. To do this we need to compute doubly resolving number of some graphs.
\par
Let $G$ and $H$ be two graphs with disjoint vertex sets. The {\it join} of  $G$ and $H$, denoted by $G\vee H$, is the
graph with vertex set  $V(G)\cup V(H)$ and edge set $E(G)\cup
E(H)\cup\{uv|\, u\in V(G),v\in V(H)\}$. To find all graphs $G$ of order $n$ with the property $\psi(G)=n-1$, we need to
compute $\psi(k_2\vee\overline{K_n}).$

\begin{lemma}\label{Y(k2 v Ks bar}
If $\overline{K_n}$ is the complement graph of $K_n$, then $\psi(K_2\vee\overline{K_n})=n+1.$
\end{lemma}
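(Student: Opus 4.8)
The plan is to exploit the twin structure of $H := K_2\vee\overline{K_n}$. Write $V(K_2)=\{a,b\}$ with $ab\in E(H)$, and let $u_1,\dots,u_n$ be the independent set coming from $\overline{K_n}$, each $u_i$ being joined to both $a$ and $b$. Then $d(a,b)=d(a,u_i)=d(b,u_i)=1$ for every $i$ and $d(u_i,u_j)=2$ for $i\neq j$, so $\mathrm{diam}(H)=2$. The key observation is that $a,b$ are twins and that $u_1,\dots,u_n$ are pairwise twins.

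First I would derive the bound $\psi(H)\geq n$ from Proposition~\ref{twins}. Applied to each pair $u_i,u_j$, it shows that at most one of the $u_i$ may lie outside a doubly resolving set $W$, so $|W\cap\{u_1,\dots,u_n\}|\geq n-1$; applied to the pair $a,b$, it gives $|W\cap\{a,b\}|\geq 1$. Hence $|W|\geq n$.

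The decisive step is to rule out $|W|=n$. If $|W|=n$, both twin inequalities above must hold with equality, so $W$ consists of exactly one of $a,b$ together with all but one of the $u_i$; by the symmetry of $a$ and $b$ we may assume $W=\{a\}\cup\{u_i:i\neq k\}$. I would then examine the pair $u_k,a$: for each $w\in W$ one has $d(u_k,w)-d(a,w)=1$, since this equals $1-0$ when $w=a$ and $2-1$ when $w=u_i$ with $i\neq k$. Thus the map $w\mapsto d(u_k,w)-d(a,w)$ is constant on $W$, so $u_k$ and $a$ are doubly resolved by no pair of $W$, contradicting the assumption on $W$. Therefore $\psi(H)\geq n+1$. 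For the matching upper bound, note that $H$ has $n+2\geq 3$ vertices, so Lemma~\ref{y(G)<n-1} yields $\psi(H)\leq n+1$ at once; alternatively one checks directly that $V(H)\setminus\{u_n\}$ is a doubly resolving set, the formerly offending pair $u_n,a$ now being doubly resolved by $a,b$ because $d(u_n,a)-d(a,a)=1\neq 0=d(u_n,b)-d(a,b)$. Combining the two bounds gives $\psi(H)=n+1$.

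I expect the only real obstacle to be the distance bookkeeping in the size-$n$ case: the twin argument delivers $\psi(H)\geq n$ effortlessly, but squeezing out the extra unit relies on the specific computation showing that $u_k$ and the retained vertex of $\{a,b\}$ exhibit a constant difference across all of $W$. Everything else is a direct appeal to Proposition~\ref{twins} and Lemma~\ref{y(G)<n-1}.
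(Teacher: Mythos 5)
Your proposal is correct and follows essentially the same route as the paper: the twin structure gives $|W|\geq n$, the constant difference $d(u_k,w)-d(a,w)=1$ across all $w\in W$ rules out $|W|=n$, and the general bound $\psi(H)\leq n+1$ finishes the argument. The only cosmetic difference is that you invoke Lemma~\ref{y(G)<n-1} explicitly for the upper bound (and add a direct verification), whereas the paper leaves that step implicit.
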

\begin{proof}{
Let $K_2\vee\overline{K_n}=G$, $X=V(K_2)$ and $Y=V(K_n)$.
 If $n=1$, then $G=p_3$ and $\psi(G)=2=n+1$. Now consider $n\geq 2$. Clearly all vertices in $X$ are twins, also all vertices in $Y$ are twins.
Let $W$ be a doubly resolving set for $G$. By Proposition~\ref{twins}, $|W\cap X|\geq1$ and $|W\cap Y|\geq n-1$. Thus $|W|\geq n$.
If $|W|=n$,  then $ X\cap W=\{x_1\}$ and there exists a vertex
 $y_0\in Y\setminus W$. Note that for each $y\in W\cap Y$, we have $d(y_0,y)-d(x_1,y)=2-1=1$
 and  $d(y_0,x_1)-d(x_1,x_1)=1-0=1$. That is $y_0,x_1$ are not doubly resolved by any pair
 of vertices in $W$, a contradiction. Therefore, $\psi(G)=|W|=n+1$.
}\end{proof}

 First, we find all graphs $G$ of order $n$, maximum degree $n-1$ and
$\psi(G)=n-1$.
\begin{pro}\label{Delta=n-1=psi}
Let $G$ be a graph of order $n\geq3$ and maximum degree $\Delta$.
If $\psi(G)=\Delta=n-1$, then $G$ is $K_n, K_{1,n-1},$ or $K_2\vee \overline{K_{n-2}}$.
\end{pro}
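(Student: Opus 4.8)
The plan is to exploit that $\Delta=n-1$ gives a universal vertex $v$ (one adjacent to every other vertex), so that $G=\{v\}\vee H$ where $H=G-v$ has $m=n-1$ vertices. All distances are then transparent: $d(v,x)=1$ for every $x\in V(H)$, while for $x,y\in V(H)$ one has $d(x,y)=1$ if $xy\in E(H)$ and $d(x,y)=2$ otherwise (the path through $v$); in particular $\mathrm{diam}(G)\le 2$. Since each of the three target graphs is exactly $\{v\}\vee H$ with $H$ respectively empty, complete, or a star $K_{1,n-2}$, it suffices to prove that $\psi(G)=n-1$ forces $H$ to be one of these three graphs; the translation back to $K_{1,n-1}$, $K_n$, $K_2\vee\overline{K_{n-2}}$ is then immediate.

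The engine of the argument is to probe $\psi(G)=n-1$ with the specific $(n-2)$-subsets obtained by deleting $v$ together with one more vertex. For $q\in V(H)$ set $W_q=V(G)\setminus\{v,q\}=V(H)\setminus\{q\}$; since $|W_q|=n-2<\psi(G)$, each $W_q$ fails to be doubly resolving. By Lemma~\ref{chek W doubly resolving} I only need to examine the pairs meeting $\{v,q\}$, and the distance table above makes each case a one-line check. Pairs $(a,v)$ with $a\in W_q$ are always doubly resolved, because for any $w\in W_q\setminus\{a\}$ we have $d(v,w)=1<2\le d(v,a)+d(a,w)$. The pair $(v,q)$ is doubly resolved exactly when $q$ has both a neighbour and a non-neighbour inside $W_q$, i.e.\ when $1\le\deg_H(q)\le m-2$ (the two attainable values of $d(v,w)-d(q,w)$ are $0$ and $-1$). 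Finally a pair $(a,q)$ with $a\in W_q$ fails to be doubly resolved only in the rigid situation where $a$ is universal in $H$ and $N_H(q)=\{a\}$. Assembling these, $W_q$ is doubly resolving iff $\deg_H(q)\in\{1,\dots,m-2\}$ and $q$ is not a leaf whose unique neighbour is universal in $H$. As $\psi(G)=n-1$ rules this out for every $q$, I conclude that each vertex of $H$ is either isolated, universal in $H$, or a leaf attached to a universal vertex.

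From this degree trichotomy the structure of $H$ falls out. If $H$ has no universal vertex, then it can have no such leaves either, so every vertex is isolated and $H=\overline{K_m}$, giving $G=K_{1,n-1}$. If $H$ has a universal vertex, then no vertex is isolated; should there be two universal vertices, any leaf would be adjacent to both and hence have degree $\ge 2$, a contradiction, so there are no leaves and $H=K_m$, giving $G=K_n$; and if there is exactly one universal vertex $u$, every remaining vertex is a leaf adjacent only to $u$, so $H=K_{1,m-1}$ is a star and $G=K_2\vee\overline{K_{n-2}}$. The tiny case $n=3$ (where $m=2$ and the ``$(a,v)$ always resolved'' step needs $|W_q|\ge 2$) is checked by hand, the only connected graphs being $P_3=K_{1,2}$ and $K_3$. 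I expect the delicate step to be the analysis of the mixed pairs $(a,q)$: it is precisely the exceptional configuration \emph{$a$ universal, $q$ a pendant at $a$} surfacing here that is responsible for the star alternative, and one must track it carefully rather than dismiss it, since it is exactly what separates $K_2\vee\overline{K_{n-2}}$ from the graphs with strictly smaller doubly resolving number.
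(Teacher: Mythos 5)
Your proof is correct, and while it runs on the same engine as the paper's argument --- showing that every $(n-2)$-set of the form $V(G)\setminus\{v,q\}$ must fail to doubly resolve, and extracting adjacency information from that failure via Lemma~\ref{chek W doubly resolving} --- you organize the extraction along a genuinely different decomposition. The paper fixes a vertex $x$ of degree $n-1$, takes a maximum independent set $I\subseteq N(x)$, and proves two structural claims in sequence (every vertex of $N(x)\setminus I$ is adjacent to all of $I$, and then $|N(x)\setminus I|\le 1$), each by an ad hoc search for the unresolved pair inside a specific probing set. You instead write $G=\{v\}\vee H$, use the resulting two-valued distance table to characterize \emph{exactly} when $W_q=V(H)\setminus\{q\}$ fails, and obtain a clean per-vertex trichotomy in $H$ (isolated, universal, or a pendant at a universal vertex), from which the three target graphs fall out by an elementary assembly. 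What your route buys is transparency: the independent-set bookkeeping disappears, and the exceptional pendant-at-universal configuration makes it visible why $K_2\vee\overline{K_{n-2}}$ is the third alternative; the cost is that the step ``pairs $(a,v)$ are always resolved'' needs $|W_q|\ge 2$, so $n=3$ must be checked by hand, which you do. Both arguments are sound; yours is arguably the cleaner of the two.
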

\begin{proof}{
 By Theorem~\ref{Y(G)<n-d+1},
 it follows that $\rm diam(G)\leq2$.
   Suppose that $x$ is a vertex of degree $n-1$ and
  $I$ is a maximum independent subset of $N(x)$. If $I=N(x)$, then $G=K_{1,n-1}$ and if $|I|=1$, then $G=K_n$ .
Hence assume that $2\leq |I|\leq n-2$. This implies that there exists a vertex $y\in N(x)\setminus I$.
We claim that $y$ is adjacent ro all vertices in $I$.
 By definition of $I$, $y$ has a neighbour
in $I$, say $a$.
 Since $\psi(G)=n-1$, the set $W=V(G)\setminus \{x,y\}$ is not a doubly resolving set for $G$.
If there exists a vertex $b\in I$ that is not adjacent to $y$, then
$$d(x,a)-d(y,a)=1-1\neq 1-2=d(x,b)-d(y,b).$$
This means $x,y$ are doubly resolved by $a,b\in W$.
Note that $2\le|I|\leq|W|$, thus for each   $v\in W$,  there exists  $v\neq u\in  W$ and we have
$$d(x,u)-d(v,u)=1-d(v,u)\neq 1=d(x,v)=d(x,v)-d(v,v).$$
Therefore for every $v\in W$,  $x,v$ are doubly resolved by some pair of vertices in $W$. Since $W$ is not a doubly resolving set,
Lemma~\ref{chek W doubly resolving} implies that there exists a vertex $w\in W$ such that $y,w$ are not doubly
resolved by any pair of vertices in $W$. Hence,
$$d(y,w)=d(y,w)-d(w,w)=d(y,a)-d(w,a)=1-d(w,a).$$ But $d(y,w)\geq1$ and $d(w,a)\geq0$ imply
$d(w,a)=0$ and $w=a$. Note that $\rm diam(G)\leq2$, yields
$$d(a,b)-d(y,b)=2-2\neq 0-1=d(a,a)-d(y,a).$$
This contradiction means that $y$ is adjacent to $b$. Therefore $y$ is adjacent to all vertices in $I$.
Since $y$ is an arbitrary vertex in $N(x)\setminus I$, the above argument implies that all vertices
in $I$ are adjacent to all vertices in $N(x)\setminus I$. We claim that $N(x)\setminus I=\{y\}$. Suppose on the contrary
that  $|N(x)\setminus I|\geq2$. Let $W=V(G)\setminus \{a,x\}$ and $b\in I\cap W$.
We have
$$d(a,b)-d(x,b)=2-1\neq 1-1=d(a,y)-d(x,y),$$
and so $a,x$ are doubly resolved by $b,y\in W$. Also
 $$d(x,y)-d(y,y)=1\neq 0=d(x,b)-d(y,b)$$
  and for each
 $y\neq w\in W$ we have
 $$d(x,y)-d(w,y)\neq 1=d(x,w)-d(w,w).$$
 Therefore for every $w\in W$, vertices $x,w$ are doubly resolved by some vertices in $W$.
 Since $W$ is not a doubly resolving set, there exists a vertex $w\in W$ such that $w,a$ are not doubly resolved by
 any pair of vertices in $W$. If $w\in I\cap W$, then
 $$d(w,y)-d(a,y)=1-1\neq -2=d(w,w)-d(a,w),$$
 which is impossible. Thus $w\in N(x)\setminus I$.  Let $w\neq y'\in N(x)\setminus I$, this implies
 $d(a,y')-d(w,y')=1-d(w,y')\leq 0$  and $d(a,w)-d(w,w)=1$. That means $a,w$ are doubly resolved by $w,y'\in W$.
 This contradiction leads us to $N(x)\setminus I=\{y\}$. Therefore $|I|=n-2$ and $G=K_2\vee\overline{K_{2,n-2}}$.
}\end{proof}
The next theorem determines all graphs $G$ of order $n$ and $\psi(G)=n-1$.
\begin{thm}\label{CHR Y(G)=n-1}
Let $G$ be a graph of order $n\geq3$. Then $\psi(G)=n-1$ if and only if $G$ is $K_n, K_{1,n-1},K_{2,n-2},$ or $K_2\vee \overline{K_{n-2}}$.
\end{thm}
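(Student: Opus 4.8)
The plan is to prove the two directions separately, with the forward implication being immediate from the computations already in the paper and the reverse implication carrying the real content. For the \emph{if} direction I would simply quote the earlier results for each listed graph: $\psi(K_n)=n-1$ for $n\ge 3$ by Lemma~\ref{y(Kn)=n-1}; $\psi(K_{1,n-1})=\psi(K_{2,n-2})=n-1$ by Lemma~\ref{Y(K r,s)}, since these are the cases $r=1$ and $r=2$, both with $r\le 2$; and $\psi(K_2\vee\overline{K_{n-2}})=(n-2)+1=n-1$ by Lemma~\ref{Y(k2 v Ks bar}. Thus all four graphs attain the value $n-1$.

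For the \emph{only if} direction, assume $\psi(G)=n-1$. The first step is to control the diameter: by Theorem~\ref{Y(G)<n-d+1} we have $n-1=\psi(G)\le n-{\rm diam}(G)+1$, so ${\rm diam}(G)\le 2$. If ${\rm diam}(G)=1$ then $G=K_n$, which is on the list. The substance is therefore the case ${\rm diam}(G)=2$, which I would split according to the maximum degree $\Delta$. If $\Delta=n-1$ there is nothing new to do: the hypothesis of Proposition~\ref{Delta=n-1=psi} is exactly $\psi(G)=\Delta=n-1$, so it yields $G\in\{K_n,K_{1,n-1},K_2\vee\overline{K_{n-2}}\}$, and ${\rm diam}(G)=2$ excludes $K_n$, leaving $K_{1,n-1}$ or $K_2\vee\overline{K_{n-2}}$.

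The remaining and genuinely new case is $\Delta\le n-2$, where I must show $G=K_{2,n-2}$. Here I would exploit the reformulation that a set $W$ is doubly resolving exactly when no two distinct vertices $a,b$ satisfy $r(a|W)-r(b|W)=(c,\ldots,c)$, i.e. the assignment $a\mapsto d(a,\cdot)$ is injective on $V(G)$ modulo additive constants. Since $\psi(G)=n-1$, for every pair $\{u,v\}$ the set $V(G)\setminus\{u,v\}$ must fail to be doubly resolving. Because ${\rm diam}(G)=2$, for such a deleted pair every coordinate of $r(u|W)-r(v|W)$ lies in $\{-1,0,1\}$, and I would observe that this difference being constantly $0$ means precisely that $u,v$ are twins, while a constant $\pm1$ would force (using connectivity) one of $u,v$ to be adjacent to all other vertices, i.e. to have degree $n-1$, which is excluded by $\Delta\le n-2$. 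Hence a non-twin deleted pair $\{u,v\}$ always resolves itself, so the failure of $V(G)\setminus\{u,v\}$ must come from an unresolved pair $\{u,w\}$ or $\{v,w\}$ with $w\in W$; analysing that obstruction under ${\rm diam}(G)=2$ forces the existence of a vertex $w$ of degree exactly $n-2$ (adjacent to all vertices but one). Feeding this back through enough choices of the deleted pair, and using Proposition~\ref{twins} to organise the twin classes, I expect to force the non-adjacency relation to be transitive, so that $G$ is complete multipartite; Lemma~\ref{Y(K r,s)} then finishes the bipartite case, while complete multipartite graphs with at least three parts (which one rules out by exhibiting a doubly resolving set of size $n-2$) or with a singleton part (which reintroduces a vertex of degree $n-1$) are incompatible with $\psi=n-1$ and $\Delta\le n-2$, pinning $G$ down to $K_{2,n-2}$.

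The main obstacle is exactly this last case $\Delta\le n-2$. The delicate point is that, although a non-twin pair $\{u,v\}$ resolves itself, the set $V(G)\setminus\{u,v\}$ can still fail on a pair involving a degree-$(n-2)$ ``near-universal'' vertex; the argument must therefore show that whenever $G$ is not $K_{2,n-2}$ one can \emph{choose} the deleted pair to avoid all such obstructions, producing an $(n-2)$-element doubly resolving set and contradicting $\psi(G)=n-1$. I expect this to require a careful diameter-two adjacency analysis in the spirit of the proof of Proposition~\ref{Delta=n-1=psi}, together with Lemma~\ref{chek W doubly resolving} to limit which pairs must be checked, rather than any single short step.
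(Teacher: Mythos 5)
Your \emph{if} direction and the first two reductions of the converse (diameter at most $2$ via Theorem~\ref{Y(G)<n-d+1}, and the case $\Delta=n-1$ via Proposition~\ref{Delta=n-1=psi}) match the paper and are fine. Several of your preliminary observations in the remaining case are also correct and worth keeping: with ${\rm diam}(G)=2$ and $\Delta\le n-2$, a constant difference $d(u,\cdot)-d(v,\cdot)$ on $V(G)\setminus\{u,v\}$ does force $u,v$ to be twins (the value $\pm1$ would make one of them universal), and an unresolved pair $\{a,u\}$ with $a\in W=V(G)\setminus\{u,v\}$ does force, via Lemma~\ref{chek W doubly resolving} and ${\rm diam}(G)=2$, a vertex $a$ of degree exactly $n-2$ adjacent to everything but $v$ (and, in fact, also $N(u)\subseteq\{a,v\}$, which you do not note but which is the strongest consequence available).

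The genuine gap is that the case $3\le\Delta\le n-2$ --- the entire content of the theorem beyond what Proposition~\ref{Delta=n-1=psi} already gives --- is never actually carried out. ``Feeding this back through enough choices of the deleted pair \ldots I expect to force the non-adjacency relation to be transitive'' and ``I expect this to require a careful diameter-two adjacency analysis'' are statements of intent, not arguments: you neither prove that non-adjacency is transitive (it is not at all clear that your obstruction classification yields this), nor rule out complete multipartite graphs with three or more parts (the claimed $(n-2)$-element doubly resolving set is not exhibited), nor show that the two parts must have sizes exactly $2$ and $n-2$. The paper closes this case by a concrete construction: take $x$ of maximum degree, a non-neighbour $y$, a common neighbour $a$, and analyse the specific failed set $W=V(G)\setminus\{a,x\}$ to conclude $N(y)=N(x)$ with $N(x)$ independent, then a second failed set $V(G)\setminus\{y',c\}$ to conclude $V(G)=N(x)\cup\{x,y\}$. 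Some argument of this kind (or a completed version of your global obstruction analysis) is indispensable; as written, your proposal asserts the conclusion of the hardest step rather than proving it.
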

\begin{proof}{
By Lemmas~\ref{y(Kn)=n-1},~\ref{Y(K r,s)} and \ref{Y(k2 v Ks bar}  the doubly resolving number of each of the
 graphs mentioned in the statement of the theorem is $n-1$.
 \par For the converse, assume that $G$ is a graph of order $n\geq3$ such that $\psi(G)=n-1$. By Theorem~\ref{Y(G)<n-d+1},
 it follows that $\rm diam(G)\leq2$. Let $\Delta$ be the maximum degree in $G$. Since $G$ is
  connected and has at least $3$ vertices, we have $\Delta\geq2$.
  \par If $\Delta=2$, then $G$ is a path $P_n$ or a cycle $C_n$. If
  $G=P_n$, then by Lemma~\ref{Y(Pn)=2},
   $n=3$ and $G=P_3=K_{1,2}$.  In the case $G=C_n$, Lemma~\ref{Y(Cn)} implies that $n\in \{3,4\}$ and $G=C_3=K_3$ or
  $G=C_4=K_{2,2}$.
  \par Now let $\Delta\geq3$. If $\Delta=n-1$, then by Proposition~\ref{Delta=n-1=psi},
   $G$ is $K_n, K_{1,n-1},$ or $K_2\vee \overline{K_{n-2}}$.
 If $3\leq \Delta\leq n-2$, then
 let $x$ be a vertex of degree $\Delta$ and $y$ be a non-adjacent vertex to $x$. Since $\rm diam(G)\leq 2$, vertices
 $x,y$ have a common neighbour, say $a$. $\Delta\geq3$ implies $x$ has at least two more neighbours, say $b$ and $c$.
 Clearly, $W=V(G)\setminus \{a,x\}$ is not a doubly resolving set for $G$. But $a,x$ are doubly resolved by $y,b$ because
 $$d(a,y)-d(x,y)=1-2=-1\neq d(a,b)-d(x,b)=d(a,b)-1\geq0.$$
 Also for each $w\in W$, vertices $x,w$ are doubly resolved by some pair of vertices in $W$. To see
 this we must consider two cases $w=c$ or $w\neq c$. If $w=c$, then $d(c,c)-d(x,c)=-d(x,c)=-1$ and
 $d(c,b)-d(x,b)=d(c,b)-1\geq0$. If $w\neq c$, then $d(w,w)-d(x,w)=-d(x,w)<0$ and $d(w,c)-d(x,c)=d(w,c)-1\geq0$.
 Since $W$ is not a doubly resolving set for $G$, there exists a vertex $w\in W$ such that $a,w$ are not
 doubly resolved by any pair of vertices in $W$. Therefore
 $$1\leq d(a,w)=d(a,w)-d(w,w)=d(a,y)-d(w,y)=1-d(w,y).$$
 This means $w=y$. Note that
 $1=d(a,y)-d(y,y)=d(a,c)-d(y,c).$ Hence $d(a,c)=2$ and $d(y,c)=1$. Since $c$ is an arbitrary neighbour of $x$, we deduce that
 $N(y)=N(x)$ and $N(x)$ is an independent set. We claim that $V(G)=N(x)\cup\{x,y\}$. Suppose on the contrary that
 there exists a vertex $ y'\in V(G)\setminus (N(x)\cup\{x,y\})$. By
 Similar to the above argument we have
  $N(y')=N(x)$.  Since $\psi(G)=n-1$, the set  $W=V(G)\setminus\{y',c\}$ can not be a doubly
 resolving set for $G$. But $y',c$ are doubly resolved by $y,b$. If $w\in W\cap N(x)$, then
 $$d(c,w)-d(w,w)=2\neq 0=d(c,x)-d(w,x).$$
For $w\in W\setminus N(x)$, let $w\neq w'\in W\setminus N(x)$. Hence
 $$d(c,w')-d(w,w')=1-2\neq 1=d(c,w)-d(w,w).$$
 Therefore for every $w\in W$, vertices $c,w$ are doubly resolved by some pair of vertices in $W$.
 By a same way for every $w\in W$, vertices $y',w$ are doubly resolved by some pair of vertices in $W$.
 This means $W$ is a doubly resolving set for $G$, which is a contradiction. Thus $V(G)=N(x)\cup\{x,y\}$, $N(x)=N(y)$ is an independent set,
 and $x$ is not adjacent to $y$. Therefore $G=K_{2,n-2}$.
}\end{proof}
\section{Doubly resolving number of unicyclic graphs}\label{unicyclic}
In this section we investigate doubly resolving sets in uncyclic graph. A unicyclic graph  is a graph that have exactly
one cycle. If $G$ is a unicyclic graph $C(G)$ indicates the unique cycle of $G$.
For each vertex $x\in V(C(G))$ of degree at least $3$ we define
$$V(x)=\{v\in V(G)\setminus V(C(G))|\forall y\in V(C(G)), d(v,x)< d(v,y)\},$$
and $T(x)$ as the induced subgraph $\langle \{x\}\cup V(x)\rangle$ of $G$. Clearly $T(x)$ is a tree.
 A  leaf
in a graph is a vertex of degree $1$. We use the notations $L(G)$ and $l(G)$ for the set of all leaves
 in the graph $G$ and its cardinality, respectively.
  Caceres et al.~\cite{cartesian product} proved the following lemma for doubly bases of trees.
\begin{lemma}\rm\cite{cartesian product}\label{bases in trees}
The set of leaves is the unique doubly resolving basis for a tree.
\end{lemma}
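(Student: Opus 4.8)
The plan is to prove the two halves of the statement separately: first that the leaf set $L(T)$ is contained in \emph{every} doubly resolving set of a tree $T$, and second that $L(T)$ is \emph{itself} a doubly resolving set. Together these give $\psi(T)=l(T)$ and force uniqueness of the basis: since every doubly resolving set contains $L(T)$, a doubly resolving set of minimum cardinality $l(T)$ can be nothing other than $L(T)$.

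For the first half I would fix a leaf $\ell$ with unique neighbour $w$ and examine the pair $\ell,w$. In a tree the path from any vertex $z\neq\ell$ to $\ell$ passes through $w$, so $d(z,\ell)=d(z,w)+1$. Consequently $d(\ell,z)-d(w,z)=-1$ for every $z\neq\ell$, while $d(\ell,\ell)-d(w,\ell)=0-1=-1$ has the exceptional value $d(\ell,z)-d(w,z)=1$ precisely when $z=\ell$. Hence the only vertex whose difference is not $-1$ is $\ell$ itself, so any set doubly resolving $\ell$ and $w$ must contain $\ell$. As $\ell$ was arbitrary, every doubly resolving set contains $L(T)$, and in particular $\psi(T)\geq l(T)$.

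For the second half I would take two distinct vertices $u,v$, let $P=(u=p_0,p_1,\dots,p_k=v)$ be the unique (hence geodesic) $u$--$v$ path with $k=d(u,v)\geq1$, and invoke the standard tree fact that each vertex $z$ has a unique entry point $p_{j(z)}$ on $P$ with $d(z,p_i)=d(z,p_{j(z)})+|i-j(z)|$ for all $i$. Applying this to $i=0$ and $i=k$ gives the clean identity
$$d(v,z)-d(u,z)=k-2\,j(z),$$
so doubly resolving $u,v$ reduces to exhibiting two leaves with different entry indices. Deleting the edge $p_0p_1$ splits off the component $T_u$ containing $u$; as a finite tree it carries a leaf of $T$ (either $u$ itself, or a leaf of $T_u$ distinct from $u$, whose degree in $T$ equals its degree in $T_u$), and every such leaf has entry index $0$. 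Symmetrically, deleting $p_{k-1}p_k$ yields a leaf of $T$ with entry index $k$. These two leaves lie in disjoint components, so they are distinct, and by the identity they give the differing values $k$ and $-k$; since $k\geq1$ these differ, so $L(T)$ doubly resolves $u,v$. Hence $L(T)$ is a doubly resolving set.

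The leaf-forcing argument settles the lower bound routinely, so I expect the real work to lie in the second half: verifying the entry-point distance formula and checking that each of the two end components of $P$ genuinely contains a leaf of $T$ with the claimed entry index (the delicate point being that a leaf of $T_u$ other than $u$ remains a leaf of $T$). Once the formula $d(v,z)-d(u,z)=k-2\,j(z)$ is in place, the conclusion, and with it the uniqueness of $L(T)$ as the doubly basis, follows immediately.
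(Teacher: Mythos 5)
The paper does not actually prove this lemma; it is quoted from Caceres et al.~\cite{cartesian product}, so there is no in-paper argument to set yours against. Your proof is correct and self-contained. The lower-bound half is the same computation the paper itself makes in Lemma~\ref{leaves}: for a leaf $\ell$ with neighbour $w$, the quantity $d(\ell,z)-d(w,z)$ takes the same value for every $z\neq\ell$, so only a pair containing $\ell$ can doubly resolve $\ell$ and $w$. Be aware, though, that your write-up of this step garbles the signs: from $d(z,\ell)=d(z,w)+1$ the common value is $+1$ for all $z\neq\ell$, and the exceptional value at $z=\ell$ is $-1$, not the other way around as your sentence reads. This is harmless, since all the argument needs is that exactly one vertex yields a deviant value, but it should be cleaned up. The upper-bound half via the entry-point (gate) formula $d(v,z)-d(u,z)=k-2j(z)$ is sound: the component of $T$ minus the first (respectively last) edge of the $u$--$v$ geodesic that contains $u$ (respectively $v$) is a tree, hence either is a single vertex that is a leaf of $T$ or has at least two leaves, one of which differs from the path endpoint and therefore keeps degree $1$ in $T$ --- your parenthetical is exactly the point that needs saying --- and every vertex of that component has entry index $0$ (respectively $k$). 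The two leaves so produced give the distinct values $k$ and $-k$ with $k\geq1$, so $L(T)$ doubly resolves $u,v$, and uniqueness of the basis follows as you state.
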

By the same proof we can see that each resolving set of a graph contains all leaves.
 The following lemma prepares a lower bound for doubly resolving number
in terms of the number of leaves in a graph.
\begin{lemma}\label{leaves}
Let $v$ be a vertex of degree $1$ in a graph $G$. Then $v$ belongs to all doubly basis of $G$, and
 $$\psi(G)\geq l(G).$$
\end{lemma}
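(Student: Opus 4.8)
The plan is to prove the two assertions in order, with the first one doing all the real work and the bound on $\psi(G)$ following immediately. Let $v$ be a leaf of $G$ and let $u$ be its unique neighbour. The starting observation I would establish is that $u$ acts as a gateway to $v$: since $v$ has degree $1$, every shortest path from any vertex $w\neq v$ to $v$ must pass through $u$, and therefore $d(v,w)=d(u,w)+1$ for all $w\neq v$.

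To show that $v$ lies in every doubly basis, I would argue by contradiction. Suppose $W$ is a doubly resolving set with $v\notin W$. Then every $w\in W$ satisfies $w\neq v$, so by the observation $d(v,w)-d(u,w)=1$ for all $w\in W$. Consequently, for any two vertices $x,y\in W$ we have
$$d(v,x)-d(u,x)=1=d(v,y)-d(u,y),$$
which means the pair $v,u$ is not doubly resolved by any pair of vertices of $W$. This contradicts the assumption that $W$ is a doubly resolving set, so every doubly basis must contain $v$. This is precisely the obstruction described in Lemma~\ref{chek W doubly resolving}, applied with $a=u$ and $b=v$.

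For the inequality, I would note that the first part applies simultaneously to each of the $l(G)$ leaves of $G$: every doubly basis $W$ contains the whole set $L(G)$, hence $|W|\geq l(G)$, which gives $\psi(G)\geq l(G)$.

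The argument presents no genuine obstacle; the only point requiring care is the distance identity $d(v,w)=d(u,w)+1$, which must be justified from $\deg(v)=1$ rather than taken for granted. In particular one should verify that the identity still holds in the boundary case $w=u$, where both differences $d(v,w)-d(u,w)$ reduce to $1$, so that the possibility $u\in W$ introduces no exception to the computation above.
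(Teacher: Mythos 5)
Your proof is correct and is essentially the same as the paper's: both observe that $d(v,w)-d(u,w)=1$ for every $w\neq v$ (where $u$ is the unique neighbour of the leaf $v$), so a set omitting $v$ cannot doubly resolve the pair $u,v$, and the bound $\psi(G)\geq l(G)$ follows by applying this to every leaf. Your version merely makes explicit the distance identity and the boundary case $w=u$, which the paper leaves implicit.
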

\begin{proof}{
Let $B$ be a doubly basis of $G$ and $u$ be  the neighbour of $v$. If $v\notin B$,
 then for each $x\in B$, $d(v,x)-d(u,x)=1$ which is impossible. Therefore $v\in B$ and $\psi(G)\geq l(G).$
}\end{proof}
\begin{pro}\label{leaves are doubly resolving set}
Let $G$ be a unicyclic graph and $W$ be a doubly resolving set for $C(G)$. If every vertex of $W$ is of degree at
least $3$, then  $L(G)$ is a doubly basis of $G$.
\end{pro}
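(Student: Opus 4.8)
The plan is to reduce the statement to the single claim that $L(G)$ is a doubly resolving set. By Lemma~\ref{leaves} we already know $\psi(G)\ge l(G)=|L(G)|$, so once $L(G)$ is shown to be doubly resolving we obtain $\psi(G)=l(G)$, and $L(G)$ is then a minimum doubly resolving set, i.e. a doubly basis. I would check the doubly resolving property in the convenient form: for every pair of distinct vertices $u,v$ the function $g(\ell):=d(u,\ell)-d(v,\ell)$ must be non-constant on $L(G)$, so the task is to exhibit, for each such pair, two leaves on which $g$ takes different values.

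First I would record two tools, writing $z_w=p(w)$ for the cycle vertex nearest a vertex $w$ and $d_C$ for distance along $C(G)$. Since every $x\in W$ has degree at least $3$, the tree $T(x)$ is non-trivial and contains a leaf $\ell_x\in L(G)$ behind $x$ (these are distinct for distinct $x$, so $|L(G)|\ge|W|\ge 2$); moreover every shortest path from a cycle vertex $w$ to $\ell_x$ passes through $x$, so $d(w,\ell_x)=d_C(w,x)+d(x,\ell_x)$. Second, I call a leaf $a$ a far leaf beyond $u$ if $u$ lies on a shortest $a$--$v$ path, so that $g(a)=-d(u,v)$; cutting the first edge of a shortest $u$--$v$ path at $u$ and taking any $G$-leaf in the component of $u$ produces such a leaf, and this component is a genuine subtree (hence has a $G$-leaf) unless $u$ lies on the cycle or $u,v$ share a tree $T(z)$ and $u$ lies on the $z$--$v$ path. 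Symmetrically one gets a far leaf $b$ beyond $v$ with $g(b)=+d(u,v)$.

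I would then split according to how many of $u,v$ lie on the cycle. If both are off the cycle, then either both far leaves exist, in which case $g(a)=-d(u,v)\ne d(u,v)=g(b)$ finishes the pair, or $u,v$ share a tree $T(z)$ with, say, $u$ on the $z$--$v$ path; then the surviving far leaf $b$ (value $+d(u,v)$) is paired with any $\ell_x$, $x\in W\setminus\{z\}$, for which the distance formula gives $g(\ell_x)=d(u,z)-d(v,z)=-d(u,v)$. If both $u,v$ lie on the cycle, the formula above yields $g(\ell_x)=d_C(u,x)-d_C(v,x)$, which is non-constant over $x\in W$ precisely because $W$ doubly resolves $C(G)$; this is the single step that genuinely consumes the hypothesis.

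The delicate case, which I expect to be the main obstacle, is when exactly one of $u,v$, say $u$, lies on the cycle. There is then no far leaf beyond $u$, so only $b$ (with $g(b)=+d(u,v)$) is guaranteed and I must produce a second leaf with a strictly smaller $g$-value. Here I would estimate $g(\ell_x)$ for $x\in W\setminus\{z_v\}$: expanding $d(u,\ell_x)$ and $d(v,\ell_x)$ through the projections and applying the triangle inequality on the cycle, $d_C(u,x)-d_C(z_v,x)\le d_C(u,z_v)$, gives $g(\ell_x)\le d(u,v)-2\,d(v,z_v)<d(u,v)$. As $|W|\ge 2$ such an $x\ne z_v$ exists, so $\ell_x$ and $b$ doubly resolve $u,v$. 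The only real care needed throughout is bookkeeping: tracking which far leaf survives in each case, and keeping $\ell_x$ out of the tree containing $v$ so that the clean projection formula for $d(v,\ell_x)$ applies.
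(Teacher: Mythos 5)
Your proof is correct. It shares the paper's outer decomposition (both of $u,v$ on the cycle, exactly one, or neither) and its Case~1 is identical, but the two off-cycle cases are executed quite differently. For a pair with an off-cycle vertex the paper projects onto the cycle, invokes the hypothesis that $W$ doubly resolves $C(G)$ a second time for the projected pair, and in the same-tree subcase additionally calls on Lemma~\ref{bases in trees} (leaves form the doubly basis of a tree); you replace all of this with the ``far leaf'' observation that a leaf $a$ separated from $v$ by $u$ has $g(a)=d(u,a)-d(v,a)=-d(u,v)$, plus the single estimate $g(\ell_x)\le d(u,v)-2d(v,z_v)<d(u,v)$ in the mixed case. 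Consequently you consume the resolving hypothesis on $W$ only when both vertices lie on the cycle; elsewhere you only need that $W$ contains two cycle vertices of degree at least $3$ so that the leaves $\ell_x$ exist, and you avoid Lemma~\ref{bases in trees} entirely. The bookkeeping you flag is handled correctly: a far leaf can genuinely fail to exist only when the relevant vertex is on the cycle or when $u,v$ share a tree $T(z)$ with one of them on the $z$--path to the other, and in each such case the substitute leaf $\ell_x$ with $x\in W\setminus\{z\}$ (available since $|W|\ge2$) has the required $g$-value. The final reduction from ``$L(G)$ is a doubly resolving set'' to ``$L(G)$ is a doubly basis'' via Lemma~\ref{leaves} is the same step the paper leaves implicit.
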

\begin{proof}{
Let $r,s\in V(G)$, we need to consider the following three cases.
\par Case $1$. $r,s\in V(C(G))$. Since $W$ is a doubly resolving set for $C(G)$, there are vertices $x,y\in W$ that
$r,s$ are doubly resolved by $x,y$. Let $x_1\in V(x)$ and $y_1\in V(y)$ be leaves. Then
\begin{eqnarray*}
d(r,x_1)-d(s,x_1)&= & d(r,x)+d(x,x_1)-(d(s,x)+d(x,x_1))=\\
&=& d(r,x)-d(s,x)\neq d(r,y)-d(s,y)=\\
&=&  d(r,y)+d(y,y_1)-(d(s,y)+d(y,y_1))=\\
&= & d(r,y_1)-d(s,y_1).
\end{eqnarray*}
Therefore $r,s$ are doubly resolved by leaves $x_1,y_1$.
\par Case $2$. $r\in V(C(G))$ and $s\notin V(C(G))$. Let $s\in V(t)$ for some $t\in V(C(G))$. Hence
there are vertices $x,y\in W$ such that $r,t$ are doubly resolved by $x,y$. There are two possibilities, $t\in\{x,y\}$ or $t\notin\{x,y\}$.
If $t\in\{x,y\}$, say $t=x$, then let $y_1$ be a leaf in $V(y)$ and $x_1$ be a leaf in $V(x)$ such that $s$ is a vertex in the shortest path
between $x$ and $x_1$. Hence $d(x,x_1)=d(x,s)+d(s,x_1)$. If $r,s$ are not resolved by $x_1,y_1$, then
\begin{eqnarray*}
d(r,x)+d(x,s)&= &(d(r,x)+d(x,s)+d(s,x_1))-d(s,x_1)=d(r,x_1)-d(s,x_1)=\\
&=& d(r,y_1)-d(s,y_1)=d(r,y)+d(y,y_1)-(d(s,y)+d(y,y_1))=\\
&=&  d(r,y)-(d(s,y)=d(r,y)-(d(s,x)+d(x,y)).
\end{eqnarray*}
Hence $d(r,x)+d(x,y)=d(r,y)-2d(x,s)$.
But  we know that $d(r,y)\leq d(r,x)+d(x,y)$ satisfies for all three vertices $x,y,r\in V(G)$. Therefore
$$d(r,y)\leq d(r,x)+d(x,y)=d(r,y)-2d(s,x)\leq d(r,y)-2,$$
which is impossible. Thus $r,s$ are doubly resolved by $x_1,y_1$. If $t\notin\{x,y\}$, let $x_1$  be a leaf in $V(x)$ and
$y_1$ be a leaf in $V(y)$. Then
\begin{eqnarray*}
d(r,x_1)-d(s,x_1)&= & d(r,x)+d(x,x_1)-(d(s,t)+d(t,x)+d(x,x_1))=\\
&=& d(r,x)-(d(s,t)+d(t,x))\neq d(r,y)-(d(s,t)+d(t,y))=\\
&=&  d(r,y)+d(y,y_1)-(d(s,t)+d(t,y)+d(y,y_1))=\\
&= & d(r,y_1)-d(s,y_1).
\end{eqnarray*}
Therefore $r,s$ are doubly resolved by leaves $x_1,y_1$.
\par Case $3$. $r,s\notin V(C(G))$. Let $r\in V(y)$ and $s\in V(x)$ for some $x,y\in V(C(G))$.
If $x\neq y$, let $x_1\in V(x)$ be a leaf such that $s$ is in the shortest path between $x$ and $x_1$
and $y_1\in V(y)$ be a leaf such that $r$ is in the shortest path between $y$ and $y_1$. Thus
$$d(r,x_1)-d(s,x_1)=d(r,y)+d(y,x)+d(x,s)+d(s,x_1)-d(s,x_1)>0,$$
and
$$d(r,y_1)-d(s,y_1)=d(r,y_1)-(d(s,x)+d(x,y)+d(y,r)+d(r,y_1))<0.$$
Therefore $r,s$ are doubly resolved by $x_1,y_1$. If $x=y$, then by Lemma~\ref{bases in trees} leaves of
$T(x)$ is a basis for $T(x)$. Let $u,t$ be two leaves of $T(x)$ that doubly resolve $r,s$. If $x\notin\{u,t\}$, then
$u,t$ are leaves of $G$. If $x\in\{u,t\}$, say $x=t$, then let $y_1\in V(G)\setminus V(x)$ be a
leaf of $G$. Thus
$$d(r,y_1)-d(s,y_1)=d(r,x)+d(x,y_1)-(d(s,x)+d(x,y_1))=d(r,x)-d(s,x)\neq d(r,t)-d(s,t).$$
Therefore $r,s$ are doubly resolved by $x,t$  and the result follows.
}\end{proof}
\begin{cor}\label{L(v)uWis resolving}
Let $G$ be a unicyclic graph and $W$ be a doubly resolving set for $C(G)$. If $U$ is the set of all vertices
of degree $2$ in $W$,
 then  $L(G)\cup U$ is a doubly resolving set for $G$.
\end{cor}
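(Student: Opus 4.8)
The plan is to run the argument of Proposition~\ref{leaves are doubly resolving set} essentially verbatim, the only new ingredient being a rule for what to do with a vertex of $W$ that has degree $2$ in $G$ and therefore carries no pendant tree. Write $B=L(G)\cup U$. Since every superset of a doubly resolving set is again doubly resolving, it suffices to exhibit, for each pair $r,s\in V(G)$, two distinct vertices of $B$ that doubly resolve them. To this end I associate to each $x\in W$ a vertex $\widehat{x}\in B$: if $\deg_G(x)\ge 3$ I let $\widehat{x}$ be a leaf of $G$ lying in $V(x)$ (such a leaf exists, since $T(x)$ is then a tree on at least two vertices and its vertex farthest from $x$ is a leaf of $G$ in $V(x)$), while if $\deg_G(x)=2$---equivalently $x\in U$ and $V(x)=\emptyset$---I simply set $\widehat{x}=x\in U$. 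Because the sets $\{x\}\cup V(x)$ for $x\in W$ are pairwise disjoint and meet $C(G)$ only in their roots, the assignment $x\mapsto\widehat{x}$ is injective, so replacing a pair $x,y$ by $\widehat{x},\widehat{y}$ always yields two distinct vertices.

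The key observation that powers the substitution is the following consequence of unicyclicity: any shortest path from a cycle vertex $z$ to a vertex $w\in\{x\}\cup V(x)$ must pass through $x$, so that $d(z,w)=d(z,x)+d(x,w)$. Consequently, for all cycle vertices $z,z'$ one has $d(z,\widehat{x})-d(z',\widehat{x})=d(z,x)-d(z',x)$, the extra segment $d(x,\widehat{x})$ cancelling in the difference. Thus in every place where the proof of Proposition~\ref{leaves are doubly resolving set} reduces a difference $d(\cdot,x_1)-d(\cdot,x_1)$ taken at a pendant leaf $x_1\in V(x)$ back to $d(\cdot,x)-d(\cdot,x)$ taken at the cycle vertex, the value is unchanged if we instead evaluate at $\widehat{x}$; in particular the computation goes through with $\widehat{x}=x$ itself when $x$ has degree $2$. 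Note also that the hypothesis only gives that $W$ doubly resolves $C(G)$, but since $d_G$ and $d_{C(G)}$ agree on pairs of cycle vertices, the defining inequalities for $W$ are inequalities in $G$ as well.

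I would then reproduce the three cases of Proposition~\ref{leaves are doubly resolving set} (with $r,s$ both on the cycle; exactly one on the cycle; neither on the cycle), at each step substituting $\widehat{x}$ for the leaf attached at a vertex $x\in W$. The main point to verify---and the only real obstacle---is a dichotomy: a degree-$2$ vertex of $W$ can appear only in a position where the substitution is of the harmless cancelling type above. Concretely, the proof invokes a leaf $x_1$ in the stronger sense, requiring $s$ (or $r$) to lie on the path from $x$ to $x_1$, exactly in Case~$2$ with $t=x$ and in Case~$3$; but in each of these situations the relevant vertex is the root of the branch containing $s$ (or $r$), hence has a nonempty pendant tree and degree at least $3$. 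Therefore no degree-$2$ vertex is ever asked to be replaced by a genuinely interior leaf, and for the remaining (cancelling) occurrences the identity of the previous paragraph lets us keep the vertex itself. Case~$3$ moreover uses only leaves of $G$ (via Lemma~\ref{bases in trees} applied to the tree $T(x)$ when $r,s$ share a branch), all of which lie in $B$. Assembling the three cases shows that every pair $r,s$ is doubly resolved within $B=L(G)\cup U$, which is the assertion.
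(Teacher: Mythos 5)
Your argument is correct in substance, but it takes a different and considerably more laborious route than the paper. The paper does not re-open the case analysis of Proposition~\ref{leaves are doubly resolving set} at all: it forms an auxiliary unicyclic graph $G'$ by attaching a new pendant leaf $v_i$ to each degree-$2$ vertex $u_i\in U$, observes that in $G'$ every vertex of $W$ has degree at least $3$, so the proposition applies verbatim and $L(G')=L(G)\cup\{v_1,\ldots,v_t\}$ is a doubly resolving set for $G'$, and then pulls the conclusion back to $G$ via the identity $d(x,v_i)-d(y,v_i)=d(x,u_i)-d(y,u_i)$. Your substitution map $x\mapsto\widehat{x}$ encodes exactly the same cancellation identity, but deploys it inside each case of the proposition's proof rather than as a one-line reduction; what the paper's construction buys is that all the bookkeeping about which occurrences of a leaf are of the harmless cancelling type is delegated once and for all to the proposition itself. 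One small caveat in your version: in Case~3 with $r,s$ in the same branch $T(x)$ and $x$ itself appearing as one of the two resolving leaves of $T(x)$, the replacement vertex need not be a leaf of $G$ (there may be no leaf of $G$ outside $V(x)$), so you must fall back on $\widehat{y}$ for some other $y\in W$, which may lie in $U$ rather than in $L(G)$; your blanket claim that Case~3 uses only leaves of $G$ is therefore slightly too strong, although your own hat construction repairs this immediately since $|W|\geq 2$.
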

\begin{proof}{
Let $U=\{u_1,u_2,\ldots,u_t\}$. We construct the graph $G'$ by adding  leaves $v_i;1\leq i\leq t$,
 to the graph $G$ such that  for each $i;1\leq i\leq t$, $v_i$ is adjacent to $u_i$.
 By Proposition~\ref{leaves are doubly resolving set},
  $L(G')$ is a doubly basis of $G'$.  Note that $L(G')=L(G)\cup\{v_1,v_2\ldots,v_t\}$. Let $x,y$ be two vertices in $G$. Then
  for every $i$, $d(x,v_i)-d(y,v_i)=d(x,u_i)-d(y,u_i)$. Therefore $L(G)\cup U$ is a doubly resolving set for $G$.
}\end{proof}

The next  theorem obtains an upper bound for the doubly resolving number of unicyclic graphs.
\begin{thm}\label{doubly of unicyclic}
Let $G$ be a unicyclic graph that is not a cycle. If $C(G)$ is of order $m$, then
$$ l(G)\leq \psi(G)\leq\left\{
\begin{array}{ll}
l(G)+1 & ~~~~ {\rm if~}m{\rm ~is~ odd}, \\
l(G)+2 & ~~~~ {\rm if~}m{\rm ~is ~ even}.
\end{array}\right.$$
\begin{proof}{
Let $C(G)=(v_1,v_2,\ldots,v_m,v_1)$.  If $m$ is odd
by lemma~\ref{Y(Cn)}, $\psi(C(G))=2$. Let $\{v_i,v_j\}$ be a doubly basis of $C(G)$. Since $G$ is not a cycle at least one of the vertices in
$C(G)$ is of degree $3$. By relabeling vertices of $C(G)$,  we can assume that $v_i$ is of degree at least $3$.
 Hence by Corollary~\ref{L(v)uWis resolving},
 $W=L(G)\cup\{v_j\}$ is a doubly resolving set for $G$. Therefore $\psi(G)\leq l(G)+1$. If $m$ is even
by lemma~\ref{Y(Cn)}, $\psi(C(G))=3$. The same argument implies that $\psi(G)\leq l(G)+2$.
}\end{proof}
\end{thm}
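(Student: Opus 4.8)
The plan is to get the lower bound for free and to extract the upper bound from Corollary~\ref{L(v)uWis resolving}. The inequality $l(G)\le\psi(G)$ is exactly Lemma~\ref{leaves}, so nothing new is needed there. For the upper bound, the idea is to feed a doubly basis $W$ of the cycle $C(G)$ into Corollary~\ref{L(v)uWis resolving}: that corollary guarantees that $L(G)\cup U$ is a doubly resolving set for $G$, where $U$ is the set of vertices of $W$ having degree $2$ in $G$. Consequently $\psi(G)\le l(G)+|U|$, and the whole game reduces to choosing $W$ so that $|U|$ is as small as possible, i.e.\ so that $W$ contains as many vertices of degree at least $3$ as possible.

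Concretely, since $G$ is not a cycle, $C(G)$ carries at least one vertex $z$ of degree at least $3$ in $G$. When $m$ is odd, Lemma~\ref{Y(Cn)} supplies a doubly basis $W$ of $C(G)$ with $|W|=2$; I would arrange that $z\in W$, whence $U\subseteq W\setminus\{z\}$ has at most one element (the remaining vertex of $W$ lies on the cycle and so has degree at least $2$, and may or may not be a genuine degree-$2$ vertex), giving $\psi(G)\le l(G)+1$. When $m$ is even, Lemma~\ref{Y(Cn)} instead gives $|W|=3$, and again forcing $z\in W$ yields $|U|\le 2$, hence $\psi(G)\le l(G)+2$. In both cases the remaining bookkeeping is routine: every vertex of $W$ sits on $C(G)$ and so has degree at least $2$ in $G$, the chosen vertex $z$ has degree at least $3$ and is therefore excluded from $U$, and $|L(G)\cup U|\le l(G)+|U|$.

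The single delicate point --- which I regard as the crux --- is the claim that a doubly basis of $C(G)$ can be chosen to contain the prescribed vertex $z$; this is the ``relabeling'' maneuver. I would justify it by exploiting the vertex-transitivity of the cycle. Fix any doubly basis $B$ of $C_m$ (one exists by Lemma~\ref{Y(Cn)}), pick any $b\in B$, and let $\sigma$ be the rotation of $C_m$ carrying $b$ to $z$. Since $\sigma$ is a graph automorphism it preserves all distances, and hence sends doubly resolving sets to doubly resolving sets; therefore $\sigma(B)$ is again a doubly basis of $C(G)$, and it contains $z=\sigma(b)$. Setting $W=\sigma(B)$ then supplies a doubly basis of $C(G)$ through $z$ of the required size, and the two cases above conclude the argument. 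I do not expect any obstacle beyond making this automorphism step explicit, since once $W$ is in hand the estimate on $|U|$ and the appeal to Corollary~\ref{L(v)uWis resolving} are immediate.
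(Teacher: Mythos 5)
Your proposal is correct and follows essentially the same route as the paper: lower bound from Lemma~\ref{leaves}, upper bound by choosing a doubly basis of $C(G)$ through a vertex of degree at least $3$ and applying Corollary~\ref{L(v)uWis resolving}. Your explicit rotation-automorphism justification is precisely what the paper compresses into the phrase ``by relabeling vertices of $C(G)$,'' so the only difference is that you spell out that step.
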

\begin{cor}\label{Y(G)=l(G) cor}
Let $G$ be a unicyclic graph that is not a cycle and $B$ be a doubly basis of $C(G)$. If all vertices of $B$ are of degree at least $3$,
then $\psi(G)=l(G)$.
\end{cor}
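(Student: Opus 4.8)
The plan is to derive this corollary directly from the two ingredients already established: the leaf lower bound in Lemma~\ref{leaves} and the structural statement of Proposition~\ref{leaves are doubly resolving set}. Since both inequalities of the desired equality $\psi(G)=l(G)$ are handled by earlier results, the work is almost entirely a matter of checking that the hypotheses line up.

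First I would recall that Lemma~\ref{leaves} applies to every graph, so in particular $\psi(G)\geq l(G)$ holds for the unicyclic graph $G$ without any extra assumption. This settles the lower bound, and it means the only thing left to prove is the reverse inequality $\psi(G)\leq l(G)$, equivalently that the set of leaves $L(G)$ is itself a doubly resolving set for $G$.

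For the upper bound I would observe that the given doubly basis $B$ of $C(G)$ is, by definition, a doubly resolving set for the cycle $C(G)$, and by hypothesis every vertex of $B$ has degree at least $3$ in $G$. These are precisely the hypotheses of Proposition~\ref{leaves are doubly resolving set} with the choice $W=B$. Applying that proposition yields that $L(G)$ is a doubly basis of $G$, hence $\psi(G)=|L(G)|=l(G)$, as claimed.

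I do not expect any genuine obstacle here, since the corollary is essentially a repackaging of Proposition~\ref{leaves are doubly resolving set} once one reads off that a doubly basis of the cycle is in particular a doubly resolving set of the cycle. The only point deserving a sentence of care is noting that the minimality of $B$ as a doubly basis of $C(G)$ is not needed for the argument: all that matters is that $B$ resolves $C(G)$ doubly and consists of vertices of degree at least $3$, so that the proposition can be invoked verbatim.
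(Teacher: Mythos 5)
Your proposal is correct and is exactly the intended derivation: the paper states this corollary without proof precisely because it follows immediately from Proposition~\ref{leaves are doubly resolving set} applied with $W=B$ (a doubly basis of $C(G)$ being in particular a doubly resolving set for $C(G)$), which already yields that $L(G)$ is a doubly basis of $G$ and hence $\psi(G)=l(G)$. Your additional appeal to Lemma~\ref{leaves} for the lower bound is harmless but redundant, since ``doubly basis'' already means a minimum doubly resolving set.
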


\end{document}